\newcommand{\mC}{\mathsf{C}}
\newcommand{\fibration}[3]{#2 \colon #1 \longrightarrow #3}
\newcommand{\arrow}[3]{#1 \xrightarrow{#2} #3}
\def\mC{\mathsf{C}}
\def\op{\operatorname{op}}
\def\pr{\pi}
\newcommand{\dial}[1]{\mathfrak{Dial}(#1)}
\newcommand{\bemph}[1]{\textbf{\emph{#1}}}
\newcommand{\angbr}[2]{\langle #1,#2 \rangle}
\def\pos{\operatorname{\mathbf{Pos}}}
\def\hey{\operatorname{\mathbf{Hey}}}
\def\set{\operatorname{\mathbf{Set}}}
\newcommand{\quadratocomm}[8]{ \xymatrix@+1pc{ 
#1 \ar[r]^{#5} \ar[d]_{#6} & #2 \ar[d]^{#7} \\
#3 \ar[r]_{#8} & #4 
}}
\renewcommand{\prod}{\forall}
\newcommand{\doctrine}[2]{\xymatrix{#2 \colon #1^{\op}  \ar[r] & \pos }}
\newcommand{\hyperdoctrine}[2]{\xymatrix{#2 \colon #1^{\op}  \ar[r] & \hey}}
\begin{document}
\title{Dialectica Logical Principles\thanks{Research supported by the project MIUR PRIN 2017FTXR IT-MaTTerS (Trotta),
 by a School of Mathematics EPSRC Doctoral Studentship (Spadetto),
and by AFOSR grant FA9550-20-10348 (de Paiva).}}
%
%
\author{Davide Trotta\inst{1}\orcidID{0000-0003-4509-594X} \and
Matteo Spadetto\inst{2}\orcidID{0000-0002-6495-7405} \and
Valeria de Paiva\inst{3}\orcidID{0000-0002-1078-6970}}
\authorrunning{D. Trotta et al.}
%
\institute{University of Pisa, Pisa, Italy \\
\email{trottadavide92@gmail.com}\\
\and University of Leeds, UK \\ \email{matteo.spadetto.42@gmail.com} \and
Topos Institute, Berkeley, USA\\
\email{valeria@topos.institute}}
\maketitle              
\begin{abstract}
G\"odel' s Dialectica interpretation was designed to obtain a relative consistency proof for Heyting arithmetic, to be used in conjunction with the double negation interpretation to obtain the consistency of Peano arithmetic. In recent years, proof theoretic transformations (so-called proof interpretations) that are based  G\"odel’s Dialectica interpretation have been used systematically to extract new content from proofs and so the interpretation has found relevant applications in several areas of mathematics and computer science.
Following our previous work on `G\"odel fibrations', we present a (hyper)doctrine characterisation of the Dialectica which corresponds exactly to the logical description of the interpretation. To show that we derive in the category theory the soundness of the interpretation of the implication connective, as expounded on by Spector and Troelstra. This requires extra logical principles, going beyond intuitionistic logic, Markov's Principle (MP) and the Independence of Premise (IP) principle, as well as some choice. We show how these principles are satisfied in the categorical setting, establishing a tight (internal language) correspondence between the logical system and the categorical framework. This tight correspondence should come handy not only when discussing the applications of the Dialectica already known, like its use to extract computational content from (some) classical theorems (proof mining), its use to help to model specific abstract machines, etc. but also to help devise new applications.

\keywords{Dialectica interpretation  \and Markov and Independence of Premise principles \and  categorical logic.}
\end{abstract}
\section{Introduction}
Categorical logic is the branch of mathematics in which tools and concepts from category theory are applied to the study of mathematical logic and its connections to theoretical computer science. In broad terms, categorical logic represents both syntax and semantics by a category, and an interpretation by a functor. 
The categorical framework provides a rich conceptual background for logical and type-theoretic constructions. In many cases, the categorical semantics of a logic provides a basis for establishing a correspondence between theories in the logic and instances of an appropriate kind of category. A classic example is the correspondence between theories of $\beta\eta$-equational logic over simply typed lambda calculus and Cartesian closed categories. Categories arising from theories via term-model constructions can usually be characterised up to equivalence by a suitable universal property. This has enabled proofs of meta-theoretical properties of logics by means of an appropriate categorical algebra. One defines a suitable internal language naming relevant constituents of a category, and then applies categorical semantics to turn assertions in a logic over the internal language into corresponding categorical statements. The goal is to obtain `internal language theorems' that allow us to pass freely from the logic/type theory to the categorical universe, in such a way that we can solve issues in whichever framework is more appropriate.

Several kinds of categorical universe are available. Our previous joint work on G\"odel's Dialectica Interpretation \cite{goedel1986} used the fibrational framework expounded by Jacobs in \cite{Jacobs1999}. The identification of syntax-free notions
of quantifier-free formulae using categorical concepts is the key insight to our results in \cite{trotta_et_al:LIPIcs.MFCS.2021.87}. This identification, besides explaining how G\"odel's Dialectica interpretation works as a double completion under products and coproducts, is itself of independent interest, as it deepens our ability to think about first-order logic, using categorical notions. 
Here we show that the notions introduced in our previous paper correspond to well-known (non-intuitionistic but)
constructive principles underlying G\"odel's Dialectica interpretation.

\section{Logical principles in the Dialectica interpretation}\label{section introducing dialectica, MP and IP}

G\"odel's Dialectica interpretation \cite{godel58,goedel1986} 
associates to each formula $\phi$ in the language of arithmetic its  \emph{Dialectica interpretation} $\phi^D$, a formula of the form: 
\[\phi^D =\exists u.\forall x .\phi_D\] which tries to be \emph{as constructive as possible}. 
The most complicated clause of the translation (and, in G\"odel’s words, “the most important one”) is the definition of the translation of the implication connective $(\psi\rightarrow \phi)^D$. This involves two logical principles which are usually not acceptable from an intuitionistic point of view, namely a form of the \emph{Principle of Independence of Premise} (IP) and a generalisation of \emph{Markov's Principle} (MP).
The interpretation is given by:
\[(\psi\rightarrow \phi)^D=\exists V,X.\forall u,y. (\psi_D(u,X(u,y))\rightarrow \phi_D (V(u),y)).\]
The motivation provided 
in the collected works of G\"odel 
for this translation
is that given a witness $u$ for the hypothesis $\psi_D$ one should be able to obtain a witness for the conclusion $\phi_D$, i.e. there exists a function $V$ assigning a witness $V(u)$ of $\phi_D$ to every witness $u$ of $\psi_D$. Moreover, this assignment has to be such that from a counterexample $y$ of the conclusion $\phi_D$ we should be able to find a counterexample $X(u,y)$ to the hypothesis $\psi_D$.  This transformation of counterexamples of the conclusion into counterexamples for the hypothesis is what gives Dialectica its essential character.

We first recall the technical details behind the translation of $(\psi\rightarrow \phi)^D$ (\cite{goedel1986}) showing the precise points in which we have to employ the non-intuitionistic principles (MP) and (IP).
First notice that $\psi^D\rightarrow \phi^D$, that is:
\begin{equation}\label{eq 1 implication}
    \exists u.\forall x.\psi_D(u,x)\rightarrow \exists v.\forall y. \phi_D(v,y)
\end{equation}
is \emph{classically} equivalent to:
\begin{equation}\label{eq 2 implication}
    \forall u.( \forall x.\psi_D(u,x)\rightarrow \exists v.\forall y. \phi_D(v,y)).
\end{equation}
If we apply a special case of the \textbf{Principle of Independence of Premise}, namely:
\begin{align*}
\tag{IP*}
( \forall x.\theta(x)\rightarrow \exists v.\forall y .\eta(v,y))\rightarrow \exists v.( \forall x.\theta(x)\rightarrow \forall y. \eta(v,y))\label{eq 3 implication}
\end{align*}
we obtain that \eqref{eq 2 implication} is equivalent to:
\begin{equation}\label{eq 3 implication}
    \forall u.\exists v.( \forall x.\psi_D(u,x)\rightarrow \forall y. \phi_D(v,y)).
\end{equation}
Moreover, we can see that this is equivalent to:
\begin{equation}\label{eq 4 implication}
    \forall u.\exists v.\forall y. ( \forall x.\psi_D(u,x)\rightarrow  \phi_D(v,y)).
\end{equation}
The next equivalence is motivated by a generalisation of \textbf{Markov’s Principle}, namely:
\begin{equation*}
\tag{MP}
    \neg \forall x. \theta(u,x) \rightarrow \exists  x. \neg \theta(u,x).
\end{equation*}
By applying (MP) we obtain that \eqref{eq 4 implication} is equivalent to:
\begin{equation}\label{eq 5 implication}
    \forall u.\exists v.\forall y.\exists x. ( \psi_D(u,x)\rightarrow  \phi_D(v,y)).
\end{equation}
To conclude that $\psi^D\rightarrow \phi^D=(\psi\rightarrow \phi)^D$ we have to apply the \textbf{Axiom of Choice} (or \textbf{Skolemisation}), i.e.:
 \begin{equation*}
 \tag{AC}
     \forall y. \exists x. \theta(y,x) \rightarrow \exists V. \forall y. \theta(y,V(y))
 \end{equation*}
 twice, obtaining that \eqref{eq 5 implication} is equivalent to:
 \begin{equation*}
   \exists V,X.\forall u,y. (\psi_D(u,X(u,y))\rightarrow \phi_D (V(u),y)).
 \end{equation*}
This analysis (from G\"odel's Collected Works, page 231) highlights the key role the principles (IP), (MP) and (AC)
play in the Dialectica interpretation of implicational formulae. 
The role of the axiom of choice (AC)
has been discussed from  a categorical perspective both by Hofstra \cite{hofstra2011} and in our previous work \cite{trotta_et_al:LIPIcs.MFCS.2021.87}.
We examine the two principles (IP) and (MP) in the next subsections.

\subsection{Independence of Premise}
In logic and proof theory, the \textbf{Principle of Independence of Premise} states that:
\begin{equation*}
       ( \theta\rightarrow \exists u.\eta(u))\rightarrow \exists u.( \theta\rightarrow \eta(u))
\end{equation*}
where $u$ is not a free variable of $\theta$. While this principle is valid in classical logic (it follows from the law of the excluded middle), it does not hold in intuitionistic logic, and it is not generally accepted constructively \cite{AvigadFeferma70}.
The reason why the principle (IP) is not generally accepted constructively is that, from a constructive perspective,  turning any proof of the premise $\phi$ into a proof of $\exists u .\eta(u)$ means turning a proof of $\theta$ into a proof of $\eta (t) $ where $t$ is a witness for the existential quantifier depending on the proof of $\theta$.  In particular, the choice of the witness \emph{depends} on the proof of the premise $\theta$, while the (IP) principle tell us, constructively, that the witness can be chosen independently of any proof of the premise $\theta$.

In the Dialectica translation we only need a particular version of (IP) principle:
\begin{align*}
\tag{IP*}
( \forall y.\theta(y)\rightarrow \exists u.\forall v. \eta(u,v))\rightarrow \exists u.( \forall y.\theta(y)\rightarrow \forall v. \eta(u,v))
\end{align*}
which means that we are asking (IP) to hold not for every formula, but only for those formulas of the form $\forall y .\theta(y)$ with $\theta$ quantifier-free. 
We  recall a useful generalisation of the (IP*) principle, namely: 
\begin{align*}
\tag{IP}
( \theta \rightarrow \exists u . \eta(u))\rightarrow \exists u.( \theta \rightarrow   \eta(u))
\end{align*}
where $\theta$ is $\exists$-free, i.e. $\theta$  contains neither existential quantifiers nor disjunctions (of course, it is also assumed that $u$ is not a free variable of $\theta$). Therefore, the condition that IP holds for every formula of the form $\forall y .\theta (y)$ with $\theta (y)$ quantifier-free is replaced by asking that it holds for every formula \emph{free from the existential quantifier}.

This formulation of (IP) is introduced in \cite{Rathjen2019} where, starting from the observation that intuitionistic finite-type arithmetic is closed under the independence of premise rule for $\exists$-free formula (IPR), it is proved that a similar result holds for  many set theories including Constructive Zermelo-Fraenkel Set Theory (CZF) and Intuitionistic Zermelo-Fraenkel Set Theory (IZF).
The \textbf{Independence of Premise Rule} for $\exists$-free formula (IPR) that we use in this paper, as in \cite{Rathjen2019}, states that:
\begin{align*}
    \tag{IPR}
    \mbox{if } \vdash \theta \rightarrow \exists u . \eta(u)) \mbox{ then } \vdash \exists u.( \theta \rightarrow   \eta(u))
\end{align*}
where $\theta$ is $\exists$-free.

\subsection{Markov's Principle}
\textbf{Markov's Principle} is a statement that originated in the Russian school of constructive mathematics.
Formally, Markov’s principle is usually presented as the statement:
\begin{equation*}
  \neg \neg \exists x.\phi (x) \rightarrow \exists x. \phi (x )
\end{equation*}
where $\phi$ is a quantifier-free formula. Thus, MP in the Dialectica interpretation, namely:
\begin{equation*}
    \tag{MP}
    \neg \forall x.\phi (x)\rightarrow \exists x. \neg \phi (x)
\end{equation*}
with $\phi(x)$ a quantifier-free formula, can be thought of as a generalisation of the Markov Principle above. As  remarked in \cite{AvigadFeferma70}, the reason why  MP is not generally accepted in constructive mathematics is that in general there is no reasonable way to choose constructively a witness $x$ for $\neg \phi (x)$ from a proof that $\forall x. \phi (x)$ leads to a contradiction.
However, in the context of Heyting Arithmetic, i.e. when $x$ ranges over the natural numbers, one can prove that these two formulations of Markov's Principle are equivalent. More details about the computational interpretation of Markov's Principle can be found in \cite{manighetti2016computational}. 
%
We  recall the version of \textbf{Markov’s Rule} (MR) corresponding to Markov's Principle:
\begin{align*}
\tag{MR}
    \mbox{if }\vdash \neg\forall x.\phi (x) \mbox{ then } \vdash \exists x. \neg \phi (x)
\end{align*}
where $\phi(x)$ is a quantifier-free formula.


\section{Logical Doctrines}

One of the most relevant notions of categorical logic which enabled the study of logic from a pure algebraic perspective is that of a \emph{hyperdoctrine},  introduced in a series of seminal papers by F.W. Lawvere to synthesise the structural properties of logical systems \cite{lawvere1969,lawvere1969b,lawvere1970}.
Lawvere’s crucial intuition was to consider logical languages and theories as fibrations to study their 2-categorical properties, e.g. connectives, quantifiers and equality are determined by structural adjunctions. Recall from \cite{lawvere1969} that a \emph{hyperdoctrine} is a functor:
\[\fibration{\mathcal{C}^{\op}}{P}{\mathbf{Hey}}\]
from a cartesian closed category $\mathcal{C}$ to the category of Heyting algebras ${\mathbf{Hey}}$ satisfying some further conditions: for every arrow $\arrow{A}{f}{B}$ in $\mathcal{C}$, the homomorphism $\fibration{P(B)}{P_f}{P(A)}$ of Heyting algebras, where $P_f$ denotes the action of the functor $P$ on the arrow $f$, has a left adjoint $\exists_f$ and a right adjoint $\forall_f$ satisfying the
Beck-Chevalley conditions. The intuition is that a hyperdoctrine determines an appropriate categorical structure to abstract both notions of first order theory and of interpretation.

Semantically, a hyperdoctrine is essentially a generalisation of the contravariant \emph{power-set functor} on the category of sets: $$\fibration{\set^{\op}}{\mathcal{P}}{\mathbf{Hey}}$$ sending any set-theoretic arrow $\arrow{A}{f}{B}$ to the inverse image functor $\arrow{\mathcal{P}B}{\mathcal{P}f=f^{-1}}{\mathcal{P}A}$. 
%
However, from the syntactic point of view, a hyperdoctrine can be seen as the generalisation of the so-called \emph{Lindenbaum-Tarski algebra} of well-formed formulae of a first order theory. In particular, given a first order theory $\mathcal{T}$ in a first order language $\mathcal{L}$, one can consider the functor:
$$\fibration{\mathcal{V}^{\op}}{\mathcal{LT}}{\mathbf{Hey}}$$ 
whose base category $\mathcal{V}$ is the \emph{syntactic} category of $\mathcal{L}$, i.e. the objects of $\mathcal{V}$ are finite  lists $\overrightarrow{x}:=(x_1,\dots,x_n)$ of variables and morphisms are lists of substitutions, while the elements of  $\mathcal{LT}(\overrightarrow{x})$ are given by equivalence classes (with respect to provable reciprocal
consequence $\dashv\vdash $) of well-formed formulae in the context $\overrightarrow{x}$, and order is given by the provable consequences with respect to the fixed theory $\mathcal{T}$. Notice that in this case an existential left adjoint
to the weakening functor $\mathcal{LT}_{\pr}$ is computed by quantifying existentially the variables that are not
involved in the substitution given by the projection (by duality the right adjoint is computed by  quantifying universally).

Recently, several generalisations of the notion of a Lawvere hyperdoctrine were considered, and we refer for example to \cite{maiettipasqualirosolini,maiettirosolini13a,maiettirosolini13b} or to  \cite{pitts02,hyland89} for higher-order versions. 
In this work we consider a natural generalisation of the notion of hyperdoctrine, and we call it simply a \emph{doctrine}.
A \textbf{doctrine} is just a functor:
$$\fibration{\mathcal{C}^{\op}}{P}{\pos}$$ 
where the category $\mathcal{C}$ has finite products and $\pos$ is the category of posets.

Depending on the categorical properties enjoyed by $P$, we get  $P$ to model the corresponding fragments of first order logic formally in a way identical to the one for $\mathcal{P}$, which we call a \textit{generalised Tarski semantics} and which continues to be complete. 
%
%
Again, the syntactic intuition behind the notion of doctrine $\fibration{\mathcal{C}^{\op}}{P}{\pos}$ remains the same, 
 one should think of $\mathcal{C}$ as the category of contexts associated to a given type theory. Given such a context $A$, the elements and the order relation of the posets $P(A)$ represent the predicates in context $A$ and the relation of syntactic provability (with respect to the fragment of first order logic modelled by $P$). Arrows $\arrow{B}{f}{A}$ of $\mathcal{C}$ represent (finite lists of) terms-in-context: 
 $$b: B \; |\;f(b) : A$$ 
 in such a way that the functor $P_f$ models the substitution by the (finite list of) term(s) $f$. For instance, if $\alpha \in PA$ represents a formula in context $a: A\; |\; \alpha (a)$, then $P_f(\alpha) \in P(B)$ represents the formula $b: B\; |\; \alpha(f(b))$ in context $B$ obtained by substituting $f$ into $\alpha$.

Now we recall from \cite{maiettipasqualirosolini,maiettirosolini13a,trotta2020} the notions of existential and universal doctrines.

\begin{definition}\label{def existential doctrine}
A doctrine $\fibration{\mathcal{C}^{\op}}{P}{\pos}$ is \bemph{existential} (resp. \bemph{universal}) if, for every $A_1$ and $A_2$ in $\mathcal{C}$ and every projection $\arrow{A_1\times A_2}{{\pr_i}}{A_i}$, $i=1,2$, the functor:
$$ \arrow{PA_i}{{P_{\pr_i}}}{P(A_1\times A_2)}$$
has a left adjoint $\exists_{\pr_i}$ (resp. a right adjoint $\forall_{\pr_i}$), and these satisfy the \bemph{Beck-Chevalley condition}: for any pullback diagram:
$$
\quadratocomm{X'}{A'}{X}{A}{{\pr'}}{f'}{f}{{\pr}}
$$
with $\pr$ and $\pr'$ projections, for any $\beta$ in $P(X)$ the equality:
$$\exists_{\pr'}P_{f'}\beta= P_f \exists_{\pr}\beta \textnormal{    (  resp. }\forall_{\pr'}P_{f'}\beta= P_f \forall_{\pr}\beta\textnormal{  )}$$
holds (however, observe that the inequality $\exists_{\pr'}P_{f'}\beta\leq P_f \exists_{\pr}\beta \textnormal{    (  resp. }\forall_{\pr'}P_{f'}\beta\geq P_f \forall_{\pr}\beta\textnormal{  )}$ always holds).
\end{definition}

If a doctrine $\fibration{\mathcal{C}^{\op}}{P}{\pos}$ is existential and $\alpha   \in P(A\times B)$ is a formula-in-context $a: A,b: B \; |\; \alpha (a,b)$ and $\arrow{A \times B}{\pi_A}{A}$ is the product projection on the component $A$, then $\exists_{\pi_A}\alpha \in PA$ represents the formula $a: A\;|\; \exists b: B . \alpha(a,b)$ in context $A$. Analogously, if the doctrine $P$ is universal, then $\forall_{\pi_A}\alpha \in PA$ represents the formula $a: A \; |\; \forall b: B .\alpha(a,b)$ in context $A$. 
This interpretation is sound and complete for the usual reasons: this is how classic Tarski semantics can be characterised in terms of categorical properties of the powerset functor $\fibration{\set^{\op}}{\mathcal{P}}{\pos}$.


We recall how 
we think of the base category of a given doctrine as the category of contexts of a given type  theory and the elements of the fibre of a given context as the predicates in that context. This intuition provides a categorical equivalence between logical theories and doctrines, via the so-called \emph{internal language} of a doctrine. 
The internal language of a doctrine $P$ essentially constitutes a syntax endowed with a semantics induced by $P$ itself: 
there is a way to interpret every sequent in the fragment of first-order logic modelled by $P$ into a categorical statement involving $P$. This interpretation is sound and complete; this is precisely why we can deduce properties of $P$ through a purely syntactical procedure.
We define the following notation for this syntax, taking advantage of these equivalent ways of reasoning about doctrines and logic.


\noindent
\textbf{Notation.} From now on, we shall employ the logical language provided by the \emph{internal language} of a doctrine and write:
\[ a_1: A_1,\dots,a_n:A_n \; | \; \phi(a_1,\dots,a_n)\vdash \psi(a_1,\dots,a_n) \]
instead of:
\[\phi\leq \psi\]
in the fibre $P(A_1\times \cdots\times A_n)$. Similarly, we write:
\[a:A\; |\; \phi(a)\vdash \exists b : B .\psi (a,b) \text{ and } a:A\; |\; \phi(a)\vdash \forall b : B . \psi (a,b) \]
in place of:
\[\phi\leq \exists_{\pr_A} \psi \text{ and } \phi\leq \forall_{\pr_A} \psi \]
in the fibre $P(A)$. Also, we write $a:A \; |\; \phi \dashv\vdash \psi$ to abbreviate $a:A \; |\; \phi \vdash \psi$ and $a:A \; |\; \psi \vdash \phi$. Substitutions via given terms (i.e. reindexings and weakenings) are modelled by pulling back along those given terms. Applications of propositional connectives are interpreted by using the corresponding operations in the fibres of the given doctrine. Finally, when the type of a quantified variable is clear from the context, we will omit the type for the  sake of readability.

\section{Logical principles via universal properties}
It is possible to characterise, in terms of weak universal properties, those predicates of a doctrine that are free from a quantifier. In the following definitions, we pursue this idea of defining those elements of an existential doctrine $\fibration{\mathcal{C}^{\op}}{P}{\pos}$ which are \emph{free from the left adjoints} $\exists_{\pr}$. This idea was originally introduced in \cite{trottamaietti2020}, and then further developed and generalised in the fibrational setting in \cite{trotta_et_al:LIPIcs.MFCS.2021.87}.

\begin{definition}\label{definition (P,lambda)-atomic0}
Let $\fibration{\mathcal{C}^{\op}}{P}{\pos}$ be an existential doctrine and let $A$ be an object of $\mathcal{C}$. A predicate $\alpha$ of the fibre $P(A)$ is said to be an  \bemph{existential splitting} if it satisfies the following weak universal property: for every projection $\arrow{A\times B}{\pr_A}{A}$ of $\mathcal{C}$ and every predicate $\beta\in P(A\times B)$ such that $\alpha\leq \exists_{\pr_A}(\beta)$, there exists an arrow $\arrow{A}{g}{B}$ such that:
\[    \alpha \leq P_{\angbr{1_A}{g}}(\beta). \]
\end{definition}
Existential splittings stable under re-indexing are called \emph{existential-free elements}. Thus we introduce the following definition:
\begin{definition}\label{definition (P,lambda)-atomic}
Let $\fibration{\mathcal{C}^{\op}}{P}{\pos}$ be an existential doctrine and let $I$ be an object of $\mathcal{C}$. A predicate $\alpha$ of the fibre $P(I)$ is said to be  \bemph{existential-free} if $P_f(\alpha)$ is an existential splitting for every morphism $\arrow{A}{f}{I}$.
\end{definition}

Employing the presentation of doctrines via internal language,  we require that for the formula $i:I\; |\; \alpha(i)$ to be free from the existential quantifier, whenever $a:A \;|\; \alpha(f(a))\vdash \exists b: B.\beta (a,b)$, for some term $a:A\;|\; f(a): I$, then there is a term $a:A\; |\; g(a):B$ such that $a:A\; |\; \alpha(f(a))\vdash \beta(a,g(a))$.

Observe that in general we always have that $a:A\; |\; \beta(a,g(a))\vdash \exists b: B.\beta (a,b)$, in other words $  P_{\langle 1_A,g\rangle}\beta\leq \exists_{\pi_A}\beta$. In fact, it is the case that $\beta \leq P_{\pi_A}\exists_{\pi_A}\beta$ (as this arrow of $P(A \times B)$ is nothing but the unit of the adjunction $\exists_{\pi_A}\dashv P_{\pi_A}$), hence a re-indexing by the term $\langle 1_A,g\rangle$ yields the desired inequality. Therefore, the property that we are requiring for $i:I\; |\; \alpha(i)$ turns out to be the following: whenever there are proofs of $\exists b: B.\beta(a,b)$ from $\alpha(f(a))$, at least one of them factors through the canonical proof of $\exists b: B .\beta (a,b)$ from $\beta(a,g(a))$ for some term $a:A\; |\; g(a) : B$. This fact implies that, while freely adding the existential quantifiers to a doctrine, we do not add a new sequent $\alpha \vdash \exists b.\beta(b)$ (where $\alpha$ and $\beta(b)$ are predicates in the doctrine we started from) as long as we do not allow a sequent $\alpha \vdash \beta(g)$ as well, for some term $g$ (see \cite{trottapasquale2020} for more details).  For the proof-relevant versions of this definition we refer to \cite{trotta_et_al:LIPIcs.MFCS.2021.87}.

We dualise the previous Definitions \ref{definition (P,lambda)-atomic0} and Definition \ref{definition (P,lambda)-atomic}  to get the corresponding ones for the universal quantifier.

\begin{definition}\label{definition (P,lambda)-coatomic0}
Let $\fibration{\mathcal{C}^{\op}}{P}{\pos}$ be a universal doctrine and let $A$ be an object of $\mathcal{C}$. A predicate $\alpha$ of the fibre $P(A)$ is said to be a  \bemph{universal splitting} if it satisfies the following weak universal property: for every projection $\arrow{A\times B}{\pr_A}{A}$ of $\mathcal{C}$ and every predicate $\beta\in P(A\times B)$ such that $\forall_{\pr_A}(\beta)\leq \alpha$, there exists an arrow $\arrow{A}{g}{B}$ such that:
\[    P_{\angbr{1_A}{g}}(\beta)\leq \alpha. \]
\end{definition}

\begin{definition}\label{definition (P,lambda)-coatomic}
Let $\fibration{\mathcal{C}^{\op}}{P}{\pos}$ be a universal doctrine and let $I$ be an object of $\mathcal{C}$. A predicate $\alpha$ of the fibre $P(I)$ is said to be  \bemph{universal-free} if $P_f(\alpha)$ is a universal splitting for every morphism $\arrow{A}{f}{I}$.
\end{definition}

The property we require of the formula $i:I\; |\; \alpha(i)$, so that it is free from the universal quantifiers, is that whenever $a:A\; | \; \forall b: B .\beta (a,b)\vdash \alpha(f(a))$, for some term $a:A \; |\; f(a):I$, then there is a term $a:A\; |\; g(a):B$  such that $a:A\;| \; \beta(a,g(a))\vdash \alpha(f(a))$.


\begin{definition}
Let $\fibration{\mathcal{C}^{\op}}{P}{\pos}$ be a doctrine. If $P$ is existential, we say that $P$ has \bemph{enough existential-free predicates} if, for every object $I$ of $\mathcal{C}$ and every predicate $\alpha\in PI$, there exist an object $A$ and an existential-free object $\beta$ in $P(I \times A)$ such that
$\alpha=\exists_{\pr_I}\beta.$

Analogously, if $P$ is universal, we say that $P$ has \bemph{enough universal-free predicates} if, for every object $I$ of $\mathcal{C}$ and every predicate $\alpha\in PI$, there exist an object $A$ and a universal-free object $\beta$ in $P(I \times A)$ such that
$\alpha=\forall_{\pr_I}\beta.$
\end{definition}

Now we can introduce a particular kind of doctrine called a \emph{G\"odel doctrine}. This  definition works as a synthesis of our process of categorification of the logical notions.

\begin{definition}\label{goedel}
A doctrine $\fibration{\mathcal{C}^{\op}}{P}{\pos}$ is called a \bemph{G\"odel doctrine} if:
\begin{enumerate}
    \item the category $\mathcal{C}$ is cartesian closed;
    \item the doctrine $P$ is existential and universal;
    \item the doctrine $P$ has enough existential-free predicates;
    \item the existential-free objects of $P$ are stable under universal quantification, i.e. if $\alpha\in P(A)$ is existential-free, then $\forall_{\pr}(\alpha)$ is existential-free for every projection $\pr$ from $A$;
    \item the sub-doctrine $\doctrine{\mathcal{C}}{P'}$ of the existential-free predicates of $P$ has enough universal-free predicates.
\end{enumerate}
\end{definition}

The fourth point 
of the Definition \ref{goedel} above implies that, given a G\"odel doctrine $\fibration{\mathcal{C}^{\op}}{P}{\pos}$, the sub-doctrine $\fibration{\mathcal{C}^{\op}}{P'}{\pos}$, such that $P'(A)$ is the poset of existential-free predicates contained in $P(A)$ for any object $A$ of $\mathcal{C}$, is a universal doctrine. From a purely logical perspective, requiring existential-free elements to be stable under universal quantification is quite natural since this can be also read as \emph{if $\alpha (x)$ is an existential-free predicate, then $\forall x: X. \alpha (x)$ is again an existential-free predicate}.

An element $\alpha$ of a fibre $P(A)$ of a G\"odel doctrine $P$ that is both an existential-free predicate and a universal-free predicate in the sub-doctrine $P'$ of existential-free elements of $P$ is called a \textbf{quantifier-free predicate} of $P$.
In order to simplify the notation, but also to make clear the connection with the logical presentation in the Dialectica interpretation, we will use the notation $\alpha_D$ to indicate an element $\alpha$ which is a quantifier-free predicate. 
Applying the definition of a G\"odel doctrine we obtain the following result.
\begin{theorem}\label{theorem prenex normal form}
Let $\doctrine{\mathcal{C}}{P}$ be a G\"odel doctrine, and let $\alpha$ be an element of $P(A)$. Then there exists a quantifier-free predicate $\alpha_D$ of $P(I\times U\times X)$ such that:
\[i:I\; |\; \alpha (i) \dashv\vdash \exists u: U.\forall x: X. \alpha_D (i,u,x).\]
\end{theorem}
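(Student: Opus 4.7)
The plan is to construct the prenex normal form by applying in sequence the two ``enough \ldots-free predicates'' conditions built into the definition of a G\"odel doctrine. The key observation is that quantifier-freeness of $\alpha_D$ is precisely what is produced by the second step, because being quantifier-free is, by definition, being universal-free inside the sub-doctrine $P'$ of existential-free predicates.

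\medskip\noindent\textbf{Step 1 (Strip the existential).} Since $P$ is a G\"odel doctrine, condition (3) applies to $\alpha \in P(I)$: there exist an object $U$ of $\mathcal{C}$ and an existential-free predicate $\beta \in P(I\times U)$ such that
\[
\alpha \;=\; \exists_{\pr_I} \beta,
\]
i.e.\ in the internal language $i:I \mid \alpha(i) \dashv\vdash \exists u:U.\,\beta(i,u)$.

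\medskip\noindent\textbf{Step 2 (Strip the universal inside $P'$).} Condition (4) guarantees that the existential-free predicates form a sub-doctrine $\doctrine{\mathcal{C}}{P'}$ that is again universal (the universal quantifier of $P$ restricts to $P'$). Since $\beta$ is existential-free, it lies in $P'(I\times U)$. Now apply condition (5): $P'$ has enough universal-free predicates, so there exist an object $X$ and a universal-free (in $P'$) predicate $\alpha_D \in P'(I\times U\times X)$ such that
\[
\beta \;=\; \forall_{\pr_{I\times U}}\alpha_D,
\]
i.e.\ $i:I,u:U \mid \beta(i,u) \dashv\vdash \forall x:X.\,\alpha_D(i,u,x)$. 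By construction $\alpha_D$ is existential-free (as an element of $P'$) and universal-free in $P'$, hence quantifier-free in the sense defined just before the theorem.

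\medskip\noindent\textbf{Step 3 (Assemble).} Substituting the equivalence from Step~2 into the one from Step~1 yields
\[
i:I \;\mid\; \alpha(i) \;\dashv\vdash\; \exists u:U.\,\forall x:X.\,\alpha_D(i,u,x),
\]
as required. Categorically this amounts to the identity $\alpha = \exists_{\pr_I}\forall_{\pr_{I\times U}}\alpha_D$ in $P(I)$, obtained by composing the two witnessing equalities.

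\medskip\noindent
There is no real obstacle here: the theorem is essentially the unpacking of Definition~\ref{goedel}, whose clauses (3), (4), (5) are engineered precisely to give a two-step Skolem/prenex decomposition. The only subtle point worth double-checking in the write-up is that ``quantifier-free'' in the conclusion really matches what Step~2 produces, namely existential-freeness in $P$ combined with universal-freeness in the sub-doctrine $P'$; clause (4) is what makes these two conditions coherent, since without it $\forall_{\pr_{I\times U}}\alpha_D$ in Step~1 would not be guaranteed to land in $P'$ and the iteration of the construction would not typecheck.
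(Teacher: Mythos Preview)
Your proposal is correct and is precisely the argument the paper intends: the text simply says ``Applying the definition of a G\"odel doctrine we obtain the following result'' without further detail, and your two-step unpacking via clauses (3), (4), (5) is the natural (and only reasonable) way to spell that out. The one cosmetic slip is in your closing paragraph, where you write ``$\forall_{\pr_{I\times U}}\alpha_D$ in Step~1''; you mean Step~2, and the point of clause (4) there is that the universal quantifier of $P'$ agrees with that of $P$, so that the equality $\beta = \forall_{\pr_{I\times U}}\alpha_D$ produced inside $P'$ is also valid in $P$.
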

This theorem shows that in a G\"odel doctrine every formula admits a presentation of the precise form used in the Dialectica translation. 

Now we show that employing the properties of a G\"odel doctrine we can provide a complete categorical description and presentation of the chain of equivalences involved in the Dialectica interpretation of the implicational formulae. In particular, we  show that the crucial steps where (IP) and (MP) are applied are represented categorically via the notions of existential-free element and universal-free element. 

Let us consider a G\"odel fibration $\doctrine{\mathcal{C}}{P}$ and two quantifier-free predicates $\psi_D\in P(U\times X)$ and $\phi_D\in P(V\times Y)$.
First notice that the following equivalence follows by definition of left adjoint functor (for sake of readability we omit the types of quantified variables as we anticipated in the previous section):
\begin{align}\label{eq first equivalence Godel doctrine}
    - \; | \; \exists u.\forall x.  \psi_D (u,x)\vdash  \exists v.\forall y.  \phi_D (v,y)  &\iff 
  u: U \; | \; \forall x.  \psi_D (u,x)\vdash  \exists v.\forall y.  \phi_D (v,y) 
\end{align} 
Now we employ the fact that the predicate $ \forall x.  \psi_D (u,x)$ is existential-free in the G\"odel doctrine, obtaining that there exists an arrow $\arrow{U }{f_0}{V}$, such that:
\begin{align*}
     u:U \; | \; \forall x.  \psi_D (u,x)\vdash  \exists v.\forall y . \phi_D (v,y) & \iff
      u:U \; | \;   \forall x  .\psi_D (u,x)\vdash   \forall y . \phi_D (f_0(u),y)
       \end{align*}
Then, since the universal quantifier is right adjoint to the weakening functor, we have that:
\begin{align*}
       u:U \; | \;  \forall x . \psi_D (u,x)\vdash   \forall y . \phi_D (f_0(u),y) & \iff
      u:U , y:Y\; | \;   \forall x . \psi_D (u,x)\vdash    \phi_D (f_0(u),y).
       \end{align*}
Now we employ the fact that  $ \phi_D (f_0(u),y)$ is universal-free in the subdoctrine of existential-free elements of $P$. Notice that since $\psi_D (u,x)$ is a quantifier-free element of the G\"odel doctrine, we have that $\forall x.  \psi_D (u,x)$ is existential free. Recall that this follows from the fact that in every G\"odel doctrine, existential-free elements are stable under universal quantification (this is the last point Definition \ref{goedel}). Therefore we can conclude that there exists an arrow $\arrow{U\times Y}{f_1}{X}$ of $\mathcal{C}$ such that:
\begin{align}\label{eq final equivalence Godel doctrine}
            u:U , y:Y\; | \;  \forall x.  \psi_D (u,x)\vdash    \phi_D (f_0(u),y)  \iff
             u:U , y:Y\; | \;  \psi_D (u,f_1(u,y))\vdash    \phi_D (f_0(u),y)
       \end{align}
Then, combining the equivalence \eqref{eq first equivalence Godel doctrine} and \eqref{eq final equivalence Godel doctrine}, we obtain the following equivalence:
\begin{align*}
   -\;|\;  \exists u.\forall x.  \psi_D (u,x)\vdash  \exists v.\forall y.  \phi_D (v,y)  &\iff \\
    \text{there exist $(f_0,f_1)$ s.t. }
   u:U , y:Y\; | \;  \psi_D (u,f_1(u,y))\vdash    \phi_D (f_0(u),y).
\end{align*} 
The arrow $\arrow{U }{f_0}{V}$ represents the \emph{witness function}, i.e. it assigns to every witness $u$ of the hypothesis a witness $f_0(u)$ of the thesis, while the arrow $\arrow{U\times Y}{f_1}{X}$ represents the \emph{counterexample function}. Notice that while the witness function $f_0(u)$ depends only of the witness $u$ the counterexample function $f_1(u,y)$ depends on a witness of the hypothesis and a counterexample of the thesis. This is a quite natural fact because, considering the constructive point of view, the counterxample has to be relative to a witness validating the thesis.

This provides a proof of the following theorem which establishes the connection between G\"odel doctrines and the Dialectica interpretation. Notice that for the sake of clarity, but also to keep the presentation closer to the original one, in the previous paragraph we have considered formulae $\exists u.\forall x.  \psi_D (u,x)$ with no free-variables. However, the previous arguments can be easily generalised also for the case of formulae of the form $\exists u.\forall x.  \psi_D (u,x,i)$, i.e. with free-variables $i$. In this case one needs to change just the domains of the functions $f_0$ and $f_1$, since they are allowed to depend  also on the free-variables. 
\begin{theorem}\label{theorem principal 1}
Let $\doctrine{\mathcal{C}}{P}$ be a G\"odel doctrine. Then for every $\psi_D\in P(I\times U\times X )$ and $\phi_D\in P(I\times V\times Y)$ quantifier-free predicates of $P$ we have that:
\[ i:I\; |\; \exists u.\forall x . \psi_D (i,u,x)\vdash \exists v.\forall y.  \phi_D (i,v,y)\]
if and only if there exists $\arrow{I\times U}{f_0}{V}$ and $\arrow{I\times U\times Y}{f_1}{X}$ such that:
\[u:U,y:Y,i:I\; |\; \psi_D(i,u,f_1(i,u,y))\vdash \phi_D (i,f_0(i,u),y).\]

\end{theorem}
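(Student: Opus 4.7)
The plan is to mirror the chain of equivalences laid out in the paragraph preceding the statement, keeping the parameter $i:I$ in context throughout. The forward direction is the substantive one, since that is where the two splitting properties extract the Skolem witnesses $f_0$ and $f_1$; the reverse direction is a routine use of the adjunctions defining $\exists$ and $\forall$.

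For the forward direction, I would proceed through four elementary moves. First, the adjunction $\exists_{\pr_I}\dashv P_{\pr_I}$, together with Beck--Chevalley to absorb the extra parameter on the right, transforms the hypothesis into $i:I,u:U\;|\;\forall x.\psi_D(i,u,x)\vdash\exists v.\forall y.\phi_D(i,v,y)$. Second, because $\psi_D$ is quantifier-free and existential-free predicates of a G\"odel doctrine are stable under universal quantification (clause (4) of Definition \ref{goedel}), the predicate $\forall x.\psi_D(i,u,x)$ is existential-free in $P(I\times U)$; applying Definition \ref{definition (P,lambda)-atomic} to the projection onto the $V$-component, I extract an arrow $f_0\colon I\times U\to V$ with $i:I,u:U\;|\;\forall x.\psi_D(i,u,x)\vdash\forall y.\phi_D(i,f_0(i,u),y)$. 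Third, the adjunction $P_{\pr_Y}\dashv\forall_{\pr_Y}$ shifts $y$ into the context, producing $i:I,u:U,y:Y\;|\;\forall x.\psi_D(i,u,x)\vdash\phi_D(i,f_0(i,u),y)$. Fourth, $\phi_D(i,f_0(i,u),y)$ is a re-indexing of the quantifier-free $\phi_D$, hence universal-free inside the sub-doctrine $P'$ of existential-free predicates, while the left-hand side also already lives in $P'$; so the universal splitting property (Definition \ref{definition (P,lambda)-coatomic}) applied to the projection onto $X$ yields $f_1\colon I\times U\times Y\to X$ with $i:I,u:U,y:Y\;|\;\psi_D(i,u,f_1(i,u,y))\vdash\phi_D(i,f_0(i,u),y)$, as required.

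For the reverse direction, one reads the same four moves backward; only the first and third are equivalences of adjunction, while the second and fourth become implications witnessed precisely by the given Skolem data. Concretely, starting from $\psi_D(i,u,f_1(i,u,y))\vdash\phi_D(i,f_0(i,u),y)$ one chains with the inequality $\forall x.\psi_D(i,u,x)\vdash\psi_D(i,u,f_1(i,u,y))$ (obtained by reindexing the counit of $P_{\pr_X}\dashv\forall_{\pr_X}$ along $\angbr{1}{f_1}$), then applies $\forall$-introduction on $y$, then $\exists$-introduction on $v$ using the witness $f_0(i,u)$, and finally $\exists$-introduction on $u$, recovering the original sequent in context $i:I$.

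The main obstacle I foresee is purely bookkeeping: making sure that at each step the types and contexts exhibit the exact shape required by Definitions \ref{definition (P,lambda)-atomic} and \ref{definition (P,lambda)-coatomic}, so that Beck--Chevalley is invoked where needed and reindexing along the pairing $\angbr{1}{-}$ lands the predicate in the intended fibre. The one genuinely conceptual ingredient, as opposed to a calculation, is verifying at Step 4 that the sequent can be legitimately read inside $P'$, which is precisely where clause (4) of Definition \ref{goedel} is indispensable. Modulo that, the argument is a faithful categorical replay of the chain of logical equivalences displayed before the statement.
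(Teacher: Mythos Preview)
Your proposal is correct and follows essentially the same route as the paper's own argument: the four forward steps (adjunction for $\exists$, existential-freeness of $\forall x.\psi_D$ via clause (4) to extract $f_0$, adjunction for $\forall$, universal-freeness of the reindexed $\phi_D$ inside $P'$ to extract $f_1$) are exactly the chain of equivalences the paper displays before the statement, now carried with the extra parameter $i:I$. Your treatment of the reverse direction and your remark that the only non-mechanical point is checking that the sequent at Step~4 lives in $P'$ (which is precisely where clause (4) of Definition~\ref{goedel} is used) are also in line with the paper.
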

This theorem shows that the notion of G\"odel doctrine encapsulates in a pure form the basic mathematical feature of the Dialectica interpretation, namely its interpretation of implication, which corresponds to the existence of functionals of types $f_0:U\to V$ and $f_1:U\times Y\to X$ as described.
One should think of this as saying that a proof of $\exists u.\forall x . \psi_D (i,u,x)\rightarrow \exists v.\forall y.  \phi_D (i,v,y)$ is obtained by transforming to $\forall u.\exists v.\forall y. \exists x. (\psi_D (i,u,x)\rightarrow   \phi_D (i,v,y))$, and then Skolemizing along the lines explained in the Section \ref{section introducing dialectica, MP and IP} and by Troelstra \cite{goedel1986}.
So, combining Theorems \ref{theorem prenex normal form} and \ref{theorem principal 1} we have  strong evidence that the notion of G\"odel doctrine really provides a categorical abstraction of the main concepts involved in the Dialectica translation. Now we show that this kind of doctrine embodies also the \emph{logical principles} involved in the translation. The first principle we consider it the axiom of choice (AC) also sometimes called the principle of Skolemisation. Since the following theorem is the proof-irrelevant version of  the proof we refer to \cite[Prop. 2.8]{trotta_et_al:LIPIcs.MFCS.2021.87} for the detailed proof.

\begin{theorem}\label{skolem}
Every G\"odel doctrine $\doctrine{\mathcal{C}}{P}$ validates the \bemph{Skolemisation principle}, that is:
$$ a_1 : A_1 \; | \; \forall a_2. \exists b. \alpha(a_1,a_2,b)\dashv\vdash \exists f. \forall a_2. \alpha (a_1,a_2,fa_2)$$ 
where $ f : B^{A_2}$ and $fa_2$ denote the evaluation of $f$ on $a_2$, whenever $\alpha(a_1,a_2,b)$ is a predicate in the context $A_1 \times A_2 \times B$.
\end{theorem}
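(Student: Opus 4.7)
The plan is to invoke Theorem \ref{theorem prenex normal form} on both sides of the biconditional and observe that, once each side is in Dialectica normal form, they coincide up to the currying isomorphism provided by the cartesian closure of $\mathcal{C}$. First, apply Theorem \ref{theorem prenex normal form} to the inner predicate $\alpha$, obtaining objects $U, X$ and a quantifier-free predicate $\alpha_D \in P(A_1 \times A_2 \times B \times U \times X)$ with $\alpha(a_1, a_2, b) \dashv\vdash \exists u . \forall x . \alpha_D(a_1, a_2, b, u, x)$.

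Substituting this normal form on the left-hand side and bundling the two existentials yields $\forall a_2 . \exists b . \alpha(a_1, a_2, b) \dashv\vdash \forall a_2 . \exists (b, u) . \forall x . \alpha_D$. A further application of Theorem \ref{theorem prenex normal form}, exploiting item 4 of Definition \ref{goedel} to ensure that $\forall x . \alpha_D$ is existential-free, then delivers the Dialectica normal form $\exists h : (B \times U)^{A_2} . \forall (a_2, x) . \alpha_D(a_1, a_2, \pi_1 h(a_2), \pi_2 h(a_2), x)$. Analogously, substituting the normal form of $\alpha$ on the right-hand side and applying Theorem \ref{theorem prenex normal form} to the inner formula $\forall a_2 . \exists u . \forall x . \alpha_D(a_1, a_2, fa_2, u, x)$ yields $\exists f \in B^{A_2} . \exists k \in U^{A_2} . \forall (a_2, x) . \alpha_D(a_1, a_2, fa_2, ka_2, x)$.

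The currying isomorphism $(B \times U)^{A_2} \cong B^{A_2} \times U^{A_2}$ supplied by the cartesian closure of $\mathcal{C}$ identifies these two normal forms: a morphism $h : A_1 \to (B \times U)^{A_2}$ corresponds to a pair $(f, k) : A_1 \to B^{A_2} \times U^{A_2}$ with $\pi_1 h(a_2) = fa_2$ and $\pi_2 h(a_2) = ka_2$. Both sides of the biconditional therefore admit the same Dialectica normal form, and transitivity of $\dashv\vdash$ concludes. The main obstacle is the extraction of the Skolem witnesses implicit in each use of Theorem \ref{theorem prenex normal form}; this delicate step, which simultaneously exploits items 3 and 4 of Definition \ref{goedel} together with the cartesian closure of $\mathcal{C}$, is the proof-relevant content of Proposition 2.8 of \cite{trotta_et_al:LIPIcs.MFCS.2021.87} to which the statement defers.
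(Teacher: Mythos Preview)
Your argument contains a circularity. Theorem \ref{theorem prenex normal form} only asserts the \emph{existence} of some quantifier-free $\alpha_D$ and some objects $U,X$ with $\alpha \dashv\vdash \exists u.\forall x.\alpha_D$; it does not tell you what they are. When you write that a ``further application of Theorem \ref{theorem prenex normal form}'' to $\forall a_2.\exists (b,u).\forall x.\alpha_D$ delivers the specific form $\exists h:(B\times U)^{A_2}.\forall(a_2,x).\alpha_D(a_1,a_2,\pi_1 h(a_2),\pi_2 h(a_2),x)$, you are asserting precisely the Skolemisation principle you are trying to prove: the passage from a $\forall\exists$-prefix to an $\exists\forall$-prefix via a function-type witness is the content of the theorem, not a consequence of the abstract existence statement in Theorem \ref{theorem prenex normal form}. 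Your final paragraph implicitly concedes this by deferring the ``extraction of the Skolem witnesses'' to Proposition 2.8 of \cite{trotta_et_al:LIPIcs.MFCS.2021.87}, but at that point the preceding normal-form manipulations add nothing: the whole argument reduces to citing the reference.

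The paper's own argument is direct and does not pass through Theorem \ref{theorem prenex normal form}. One assumes $a_1:A_1\;|\;\gamma(a_1)\vdash \forall a_2.\exists b.\alpha(a_1,a_2,b)$, reduces without loss of generality to $\gamma$ existential-free using item 3 of Definition \ref{goedel}, transposes along the $\forall$-adjunction, and then invokes existential-freeness of $\gamma$ to extract a witness $g(a_1,a_2):B$. Cartesian closure curries $g$ to $h(a_1):B^{A_2}$, and one concludes $\gamma(a_1)\vdash\exists f.\forall a_2.\alpha(a_1,a_2,fa_2)$ by the usual unit inequalities. Taking $\gamma=\forall a_2.\exists b.\alpha$ finishes the non-trivial direction. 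Note in particular that this argument uses only items 1--4 of Definition \ref{goedel}, in line with the remark following the theorem; your route through Theorem \ref{theorem prenex normal form} would drag in item 5 unnecessarily.
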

\begin{remark}
In the proof of Theorem \ref{skolem} we do not need the property \textit{5.} of Definition \ref{goedel}. That is why, according to \cite{trotta_et_al:LIPIcs.MFCS.2021.87}, one calls a Skolem doctrine a doctrine satisfying all of the properties satisfied by a G\"odel doctrine, except for the \textit{5.} one.
\end{remark}

Recall that the notion of Dialectica category introduced in \cite{dePaiva1989dialectica} has been generalised to the fibrational setting in \cite{hofstra2011}, and then, in particular, we can consider the proof-irrelevant construction associating  a doctrine $\dial{P}$ to a given doctrine $P$:\\

\noindent
\textbf{Dialectica construction.} Let $\doctrine{\mathcal{C}}{P}$ be a doctrine whose base category $\mC$ is cartesian closed. We define the \textbf{dialectica doctrine} $\doctrine{\mathcal{C}}{\dial{P}}$ the functor sending an object $I$ into the poset $\dial{P}(I)$ defined as follows:
\begin{itemize}
\item \textbf{objects} are quadruples $(I, X,U,\alpha)$ where $I,X$ and $U$ are objects of the base category $\mathcal{C}$ and $\alpha\in P(I\times X\times U)$;
\item \textbf{partial order:} we stipulate that $(I, U,X,\alpha)\leq (I,V,Y,\beta)$ if there exists a pair $(f_0,f_1)$, where $\arrow{I\times U}{f_0}{V}$ and  $\arrow{I\times U\times Y}{f_1}{X}$  are morphisms of $\mathcal{C}$  such that: $$\alpha(i,u,f_1(i,u,y))\leq \beta (i,f_0(i,u),y).$$ \end{itemize} In \cite{trotta_et_al:LIPIcs.MFCS.2021.87} we proved that fibration is an instance of the Dialectica construction if and only if it is a G\"odel fibration, and to prove this result we employ the decomposition of the Dialectica monad as free-simple-product completion followed by the free-simple-coproduct completion of fibrations. So we can deduce the same result for the proof-irrelevant version here simply as a particular case.

However, notice that employing Theorems \ref{theorem prenex normal form} and \ref{theorem principal 1} we have another simpler and more direct way for proving such correspondence, because Theorem \ref{theorem principal 1} states that the order defined in the fibres of a G\"odel doctrine is exactly the same order defined in a dialectica doctrine. 
The idea is that if $P$ is a G\"odel doctrine and $P'$ is the subdoctrine of quantifier-free elements of $P$ it is easy to check that the assignment $\arrow{P(I)}{(-)^D}{\dial{P'}(I)}$ sending $\alpha\mapsto (I,X,U,\alpha_D)$ where $\alpha_D$ is the equantifier-free element such that $\alpha(i)\dashv\vdash \exists u\forall x\alpha_D(i,u,x)$  (which exists by by Theorem \ref{theorem prenex normal form}), provides an isomorphism of posets by Theorem \ref{theorem principal 1}, and it can be extended to an isomorphism of existential and universal doctrines. 
\begin{theorem}\label{theorem godel iff dialectica}
Every G\"odel doctrine $P$ is equivalent to the Dialectica completion $\dial{P'}$ of the full subdoctrine $P'$ of $P$ consisting of the quantifier-free predicates of $P$.
\end{theorem}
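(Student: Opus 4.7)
The plan is to construct an explicit isomorphism of doctrines $P \simeq \dial{P'}$ fibrewise, and then check that this assignment is natural in the base object, i.e. compatible with reindexing, and preserves the relevant categorical structure (finite meets, existential and universal quantifiers). The key technical ingredients are Theorem \ref{theorem prenex normal form}, which provides a prenex presentation $\alpha(i) \dashv\vdash \exists u.\forall x.\alpha_D(i,u,x)$ for every predicate of $P$, and Theorem \ref{theorem principal 1}, which translates inequalities between such prenex formulae into the existence of witness/counterexample pairs $(f_0,f_1)$, i.e.\ exactly the order of $\dial{P'}$.

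First I would define the candidate isomorphism fibrewise. For each object $I$ of $\mC$, set
\[\arrow{P(I)}{(-)^D}{\dial{P'}(I)}, \qquad \alpha \longmapsto (I, U, X, \alpha_D),\]
where $U,X$ and $\alpha_D \in P'(I\times U \times X)$ are given by Theorem \ref{theorem prenex normal form}. The prenex presentation is not canonical, but different choices yield $\dial{P'}$-isomorphic quadruples: indeed, if $\alpha \dashv\vdash \exists u.\forall x.\alpha_D \dashv\vdash \exists u'.\forall x'.\alpha_D'$, then applying Theorem \ref{theorem principal 1} to both directions of $\dashv\vdash$ produces witness/counterexample pairs exhibiting $(I,U,X,\alpha_D) \cong (I,U',X',\alpha_D')$ in $\dial{P'}(I)$. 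So $(-)^D$ is well defined on isomorphism classes, and Theorem \ref{theorem principal 1} immediately implies it is an order-isomorphism onto its image. An inverse is given on the nose by $(I,U,X,\beta) \mapsto \exists u.\forall x.\beta(i,u,x)$, which lands in $P(I)$ and whose composition with $(-)^D$ is the identity (in both directions, up to the equivalence just discussed) by Theorem \ref{theorem prenex normal form}.

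Next I would verify naturality in $I$. Given $\arrow{J}{t}{I}$ in $\mC$ and $\alpha \in P(I)$ with prenex form $\exists u.\forall x.\alpha_D$, the reindexing $P_t(\alpha)$ has prenex form $\exists u.\forall x.P_{t \times 1_U \times 1_X}(\alpha_D)$ by Beck--Chevalley for $\exists$ and $\forall$ along projections, and this is exactly the action of $\dial{P'}$-reindexing on $(I,U,X,\alpha_D)$. Hence $(-)^D$ commutes with substitution. Compatibility with meets is analogous, using that in the prenex form one can pull quantifiers out of a conjunction along the cartesian closed structure of $\mC$. Finally, preservation of $\exists$ and $\forall$ along projections: on the $P$-side, $\exists_{\pr}$ and $\forall_{\pr}$ act by pushing the relevant variable into the existential or universal block of the prenex form (using stability of existential-free predicates under $\forall$, point \textit{4.} of Definition \ref{goedel}), while on the $\dial{P'}$-side they act in precisely the same way by the definition of the Dialectica construction; the two actions match under $(-)^D$.

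The main obstacle, and where the argument has to be carried out carefully rather than routinely, is the well-definedness and functoriality of $(-)^D$: since the prenex witnesses $(U,X,\alpha_D)$ are only determined up to $\dial{P'}$-isomorphism, every clause above (reindexing, meets, quantifiers) must be checked to be independent of the chosen representative. Once this bookkeeping is done, everything else is forced by Theorems \ref{theorem prenex normal form} and \ref{theorem principal 1}, and the claimed equivalence $P \simeq \dial{P'}$ of existential and universal doctrines follows.
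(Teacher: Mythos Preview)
Your proposal is correct and follows essentially the same approach as the paper: the paper's argument (contained in the paragraph immediately preceding the theorem) defines the assignment $\alpha\mapsto(I,U,X,\alpha_D)$ via Theorem~\ref{theorem prenex normal form}, invokes Theorem~\ref{theorem principal 1} to see that it is an order-isomorphism on each fibre, and then asserts that it extends to an isomorphism of existential and universal doctrines; you are simply spelling out the details (well-definedness up to $\dial{P'}$-iso, naturality via Beck--Chevalley, preservation of quantifiers) that the paper leaves implicit. One small caveat: your clause on ``compatibility with meets'' is extraneous here, since a G\"odel doctrine as defined in the paper is only a functor into $\pos$ and the equivalence is asserted only at the level of existential and universal doctrines, not hyperdoctrines.
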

Therefore, we have that Theorem \ref{theorem godel iff dialectica} provides another way of thinking about Dialectica doctrines (or Dialectica categories) since it underlines the logical properties that a doctrine has to satisfy in order to be an instance of the Dialectica construction.

\section{Logical Principles in G\"odel Hyperdoctrines}
G\"odel doctrines provide a categorical framework that generalises the principal concepts underlying the Dialectica translation, such as the existence of witness and counterexample functions whenever we have an implication $i:I\; |\; \exists u.\forall x . \psi_D (u,x,i)\vdash \exists v.\forall y . \phi_D (v,y,i)$. 
The key idea is that, intuitively, the notion of \emph{existential-quantifier-free} objects can be seen as a
reformulation of the \emph{independence of premises rule}, while \emph{product-quantifier-free} objects can be seen as a reformulation of \emph{Markov's rule}. Notice that in the proof of Theorem \ref{theorem principal 1} existential and universal free elements play the same role that (IP) and (MP) have in the Dialectica interpretation of implicational formulae.

The main goal of this section is to formalise this intuition showing the exact connection between the rules (IPR) and (MR) and G\"odel doctrines. 
So, first of all we have to equip G\"odel doctrines with the appropriate Heyting structure in the fibres in order to be able to formally express these principles. Therefore, we have to consider G\"odel hyperdoctrines.
\begin{definition}
A hyperdoctrine $\hyperdoctrine{\mathcal{C}}{P}$ is said a \bemph{G\"odel hyperdoctrine} when $P$ is a G\"odel doctrine.

\end{definition}

From a logical perspective, one might want the quantifier-free predicates to be closed with respect to all of the propositional connectives (or equivalently that $P$ is the dialectica completion of a hyperdoctrine itself - see \cite{trottapasquale2020}), since this is what happens in logic. However, we do not need such a strong condition here. 
We only require in the next statements that $\bot$ is quantifier-free and/or that $\top$ is existential free.

\begin{theorem}\label{IP in Godel doctrine}
Every G\"odel hyperdoctrine $\hyperdoctrine{\mathcal{C}}{P}$ satisfies the \bemph{Rule of Independence of Premise}, i.e. whenever $\beta \in P(A \times B)$ and $\alpha\in P(A)$ is a existential-free predicate, it is the case that: \[a:A\; | \; \top \vdash \alpha(a) \rightarrow \exists b.\beta(a,b) \mbox{ implies that } a:A\; | \; \top \vdash \exists b.(\alpha(a)\rightarrow \beta(a,b)).\]

\end{theorem}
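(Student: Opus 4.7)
The plan is to reduce the Rule of Independence of Premise directly to the weak universal property encoded by Definition \ref{definition (P,lambda)-atomic}. The key observation is that the definition of an existential-free predicate is essentially a syntax-free re-statement of (IPR) applied to the premise $\alpha$, so the proof amounts to the familiar currying/uncurrying of a Heyting implication around a single invocation of that universal property.

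Concretely, I would proceed as follows. First, starting from the hypothesis $a:A \mid \top \vdash \alpha(a) \to \exists b.\beta(a,b)$, I would apply the Heyting adjunction $(-)\wedge \alpha \dashv \alpha \to (-)$ in the fibre $P(A)$ to obtain the equivalent sequent $a:A \mid \alpha(a) \vdash \exists b.\beta(a,b)$; that is, $\alpha \leq \exists_{\pr_A}\beta$ in $P(A)$. Second, since $\alpha$ is existential-free, applying Definition \ref{definition (P,lambda)-atomic} with $f = 1_A$ tells us that $\alpha = P_{1_A}\alpha$ is an existential splitting, and hence from $\alpha \leq \exists_{\pr_A}\beta$ there is a morphism $g : A \to B$ in $\mathcal{C}$ with $\alpha \leq P_{\angbr{1_A}{g}}\beta$, i.e. $a:A \mid \alpha(a) \vdash \beta(a,g(a))$.

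Third, I would curry back using the same Heyting adjunction to obtain $a:A \mid \top \vdash \alpha(a) \to \beta(a,g(a))$. Finally, I would compose with the unit of $\exists_{\pr_A} \dashv P_{\pr_A}$ evaluated at the predicate $\alpha(a) \to \beta(a,b) \in P(A \times B)$ and reindexed along $\angbr{1_A}{g}$, which gives the sequent $a:A \mid \alpha(a) \to \beta(a,g(a)) \vdash \exists b.(\alpha(a) \to \beta(a,b))$. Concatenating this with the previous step yields $a:A \mid \top \vdash \exists b.(\alpha(a) \to \beta(a,b))$, which is the desired conclusion.

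The main (and really only) subtle point is the second step, where one has to recognise that the categorical definition of \emph{existential-free predicate} is set up precisely to provide a witnessing term $g$ out of a proof of the existential goal; once this dictionary is in place, the rest of the argument is a routine Heyting manipulation, which is why nothing beyond the Heyting structure on the fibres and the existential-free hypothesis on $\alpha$ is needed (in particular, neither \enquote{$\bot$ is quantifier-free} nor \enquote{$\top$ is existential-free} is invoked here).
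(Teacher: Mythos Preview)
Your proof is correct and follows essentially the same route as the paper's own argument: uncurry the Heyting implication, invoke the existential-free property of $\alpha$ to extract a witnessing term, curry back, and then use the canonical inequality $P_{\angbr{1_A}{g}}(\gamma)\leq \exists_{\pr_A}\gamma$ for $\gamma(a,b):=\alpha(a)\to\beta(a,b)$. Your write-up is just a bit more explicit about the categorical bookkeeping (naming the Heyting adjunction and the unit of $\exists_{\pr_A}\dashv P_{\pr_A}$), but the underlying steps coincide with the paper's proof.
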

\begin{proof}
Let us assume that $a:A\; | \; \top \vdash \alpha(a) \rightarrow \exists b.\beta(a,b)$. Then it is the case that $a : A\;|\; \alpha(a) \vdash \exists b.\beta(a,b)$. Since $\alpha(a)$ is free from the existential quantifier, it is the case that there is a term in context $a:A\; | \;t(a) : B$ such that: \[a:A\; | \; \top \vdash \alpha(a) \rightarrow \beta(a,t(a)).\] Therefore, since: \[ a:A\;|\;\alpha(a) \rightarrow \beta(a,t(a))\vdash \exists b.(\alpha(a) \rightarrow \beta(a,b))\] (as this holds for any predicate $\gamma(a,-)$ in place of the predicate $\alpha_D(a)\rightarrow \beta(a,-)$) we conclude that: \[a:A\; | \; \top \vdash \exists b.(\alpha(a)\rightarrow \beta(a,b)).\]
\end{proof}
Notice that Theorem \ref{IP in Godel doctrine} formalises precisely the intuition that the notion of existential-free element can be seen as a reformulation of the independence of premises rule: in a G\"odel hyperdoctrine we have that existential-free elements are \emph{exactly} elements satisfying the independence of premises rule.

\begin{theorem}\label{theorem weak markov rule}
Every G\"odel hyperdoctrine $\hyperdoctrine{\mathcal{C}}{P}$ satisfies the following \bemph{Modified Markov's Rule}, i.e. whenever $\beta_D \in P(A)$ is a quantifier-free predicate and $\alpha \in P(A\times B)$ is an existential-free predicate, it is the case that: 
\[a:A\; | \; \top \vdash (\forall b. \alpha(a,b)) \rightarrow \beta_D(a) \mbox{ implies that } a:A\; | \; \top \vdash \exists b.(\alpha(a,b)\rightarrow \beta_D(a)).\]
\end{theorem}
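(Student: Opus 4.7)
The plan is to mimic the structure of the proof of Theorem \ref{IP in Godel doctrine}, replacing the use of existential-freeness of the premise by the universal-freeness of the conclusion within the sub-doctrine $P'$ of existential-free predicates. First I would transpose the hypothesis across the Heyting adjunction to get the entailment
\[ a:A\;|\; \forall b.\alpha(a,b) \vdash \beta_D(a). \]
Since $\alpha \in P(A\times B)$ is existential-free by assumption and, by clause \textit{4.} of Definition \ref{goedel}, existential-free predicates are stable under universal quantification, the left-hand side $\forall_{\pi_A}\alpha$ lies in the sub-doctrine $P'$. The right-hand side $\beta_D$ is quantifier-free, hence in particular existential-free, so the displayed sequent is an inequality in $P'(A)$.

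Next I would invoke the fact that $\beta_D$ is universal-free in $P'$. By Definition \ref{definition (P,lambda)-coatomic}, since $\alpha \in P'(A\times B)$ satisfies $\forall_{\pi_A}\alpha \leq \beta_D$ in $P'(A)$, there exists a morphism $g\colon A \to B$ of $\mC$ with $P_{\langle 1_A,g\rangle}\alpha \leq \beta_D$; i.e.\ in internal language,
\[ a:A\;|\; \alpha(a,g(a)) \vdash \beta_D(a). \]
Transposing back along the Heyting adjunction yields
\[ a:A\;|\; \top \vdash \alpha(a,g(a)) \rightarrow \beta_D(a), \]
and then applying the unit of the existential adjunction (that is, using the canonical sequent $\gamma(a,g(a)) \vdash \exists b.\gamma(a,b)$ with $\gamma(a,b) := \alpha(a,b) \rightarrow \beta_D(a)$) gives the desired conclusion $a:A\;|\; \top \vdash \exists b.(\alpha(a,b) \rightarrow \beta_D(a))$.

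The only subtle point — and the place where the structure of a G\"odel doctrine is really used — is the second step: one must ensure that the hypothesis $\forall_{\pi_A}\alpha \leq \beta_D$ is an inequality inside the sub-doctrine $P'$ where universal-freeness of $\beta_D$ is available. This is handled precisely by clause \textit{4.} of Definition \ref{goedel} (so that $\forall_{\pi_A}\alpha$ remains in $P'$) together with the fact that the inclusion $P' \hookrightarrow P$ is full and preserves the $\forall$ structure. Everything else is a routine adjunction transposition, which is why the proof parallels Theorem \ref{IP in Godel doctrine} almost verbatim, with the roles of the existential-free premise and the universal-free conclusion dualised.
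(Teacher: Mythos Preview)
Your proposal is correct and follows essentially the same route as the paper's proof: transpose across the Heyting adjunction, use that $\beta_D$ is universal-free in the sub-doctrine $P'$ of existential-free predicates (which requires $\alpha$ to lie in $P'$, i.e.\ to be existential-free) to extract a witnessing term, then transpose back and apply the unit of the existential adjunction. Your write-up is in fact more explicit than the paper's about why the step through $P'$ is legitimate (invoking clause \textit{4.} of Definition \ref{goedel} and fullness of $P'\hookrightarrow P$), whereas the paper simply records ``since $\beta_D$ is quantifier-free and $\alpha$ is existential-free'' and proceeds.
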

\begin{proof}
Let us assume that $a:A\; | \; \top \vdash (\forall b. \alpha(a,b)) \rightarrow \beta_D(a) $. Then it is the case that $a:A\; | \;  (\forall b. \alpha(a,b)) \vdash \beta_D(a) $. Hence, since $\beta_D$ is quantifier-free and $\alpha$ is existential-free, there exists a term in context  $a:A\; | \;t(a) : B$ such that:
\[a:A\; | \;  \top\vdash \alpha(a,t(a)) \rightarrow \beta_D(a) \]
therefore, since:
\[ a:A\;|\;\alpha(a,t(a)) \rightarrow \beta(a)\vdash \exists b.(\alpha(a,b) \rightarrow \beta_D(a))\]
we can conclude that:
\[ a:A\; | \; \top \vdash \exists b.(\alpha(a,b)\rightarrow \beta_D(a)).\]
\end{proof}
While for the case of (IPR) we have that existential-free elements of a G\"odel hyperdoctrine correspond to formulae satisfying (IPR), we have that the elements of a G\"odel doctrine that are quantifier-free, i.e. universal-free in the subdoctrine of existential-free elements, are exactly those satisfying a modified Markov's Rule by Theorem \ref{theorem weak markov rule}. Moreover, notice that this Modified Markov's Rule is exactly the one we need in the equivalence between \eqref{eq 4 implication} and \eqref{eq 5 implication} in the interpretation of the implication in Section \ref{section introducing dialectica, MP and IP}. Alternatively, in order to get this equivalence one requires $\beta_D$ to satisfy the law of excluded middle and the usual Markov's Rule (see Corollary \ref{theorem Markov}), as these two assumptions yield the Modified Markov's Rule. In particular, any boolean doctrine (a hyperdoctrine modelling the law of excluded middle) satisfies the Modified Markov's Rule (see Remark \ref{goedel vs boolean}).

To obtain the usual Markov Rule as corollary of Theorem \ref{theorem weak markov rule}, we simply have to require the bottom element $\bot$ of a G\"odel hyperdoctrine to be \emph{quantifier-free}.
\begin{corollary}\label{theorem Markov}

Every G\"odel hyperdoctrine  $\hyperdoctrine{\mathcal{C}}{P}$ such that $\bot$ is a quantifier-free predicate satisfies \bemph{Markov's Rule}, i.e. for every quantifier-free element $\alpha_D\in P(A\times B)$ it is the case that:
\[b:B\; | \; \top \vdash \neg \forall a. \alpha_D (a,b) \mbox{ implies that } b:B\; | \; \top \vdash \exists a . \neg \alpha_D (a,b).\]
\end{corollary}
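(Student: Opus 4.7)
The plan is to derive the corollary as an immediate instance of Theorem \ref{theorem weak markov rule}, by specialising its quantifier-free predicate $\beta_D$ to $\bot$ and its existential-free predicate $\alpha$ to $\alpha_D$. Two preliminary observations legitimise this specialisation: first, by the standing hypothesis of the corollary, $\bot$ is quantifier-free, so it is a valid choice for $\beta_D$; second, by the definition of quantifier-free predicate recalled before Theorem \ref{theorem prenex normal form}, every quantifier-free predicate is in particular existential-free, so $\alpha_D$ is a valid choice for $\alpha$.

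First, I would rewrite the hypothesis by unfolding negation as implication into $\bot$. The assumption $b:B\;|\;\top\vdash \neg \forall a.\alpha_D(a,b)$ is, by definition, $b:B\;|\;\top\vdash (\forall a.\alpha_D(a,b))\to \bot$. To match the shape of Theorem \ref{theorem weak markov rule}, I would let the ambient context $A$ of that theorem be $B$ here, and the quantified type $B$ of that theorem be $A$ here, so that $\alpha(b,a) := \alpha_D(a,b)$ lives in $P(B\times A)$ and is existential-free as noted, while $\beta_D(b) := \bot$ lives in $P(B)$ and is quantifier-free by assumption. Weakening $\bot$ through the projection $B\times A\to B$ preserves its property of being quantifier-free, so the hypotheses of Theorem \ref{theorem weak markov rule} are met.

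Applying Theorem \ref{theorem weak markov rule} then yields
\[b:B\;|\;\top\vdash \exists a.(\alpha_D(a,b)\to \bot),\]
which is by definition $b:B\;|\;\top\vdash \exists a.\neg \alpha_D(a,b)$, as required.

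There is no real obstacle: the proof is a direct specialisation, and the only minor care required is the renaming of variables (swapping the roles of $a$ and $b$) and the observation that quantifier-freeness is stable under the weakening used to view $\bot\in P(B)$ as a predicate in $P(B\times A)$. Both are routine given the structure set up in Section 4 and the assumption on $\bot$ carried by the corollary.
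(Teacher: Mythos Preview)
Your proposal is correct and follows exactly the paper's approach: the paper's proof consists of the single sentence that the result follows from Theorem~\ref{theorem weak markov rule} by taking $\beta_D := \bot$, which is quantifier-free by hypothesis. Your additional care about variable renaming and the observation that quantifier-free implies existential-free are fine (though the remark about weakening $\bot$ to $P(B\times A)$ is unnecessary, since in Theorem~\ref{theorem weak markov rule} the predicate $\beta_D$ already lives in the base context, not the extended one).
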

\begin{proof}
It follows by Theorem \ref{theorem weak markov rule} just by replacing $\beta_D$ with $\bot$, that is quantifier-free by hypothesis.
\end{proof}

\begin{remark}\label{goedel vs boolean}
Any boolean doctrine satisfies the Rule of Independence of Premises and the (Modified) Markov Rule. In general these are not satisfied by a usual hyperdoctrine, because they are not satisfied by intuitionistic first-order logic. It turns out that \textit{the logic modelled by a G\"odel hyperdoctrine is right in-between intuitionistic first-order  and classical first-order logic}: it is  powerful enough to guarantee the equivalences in Section \ref{section introducing dialectica, MP and IP} that justify the Dialectica interpretation of the implication.
\end{remark}

We conclude  by presenting two other results about the Rule of Choice and the Counterexample Property previously defined in \cite{trottapasquale2020}, which  follow directly from the  definitions of existential-free and universal-free elements.


\begin{corollary}\label{proposition Counterexample Property}
Every G\"odel hyperdoctrine  $\hyperdoctrine{\mathcal{C}}{P}$ such that $\bot$ is a quantifier-free object satisfies the \bemph{Counterexample Property}, that is, whenever: $$a : A \; |\; \forall b. \alpha(a,b)\vdash \bot$$ for some predicate $\alpha(a,b)\in P(A\times B)$, then it is the case that: $$a : A \; |\; \alpha(a,g(a))\vdash \bot$$ for some term in context $a : A \;|\; g(a) : B$.
\end{corollary}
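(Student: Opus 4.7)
The plan is to observe that the Counterexample Property is essentially just the universal splitting property of $\bot$ expressed in the internal language of a G\"odel hyperdoctrine, and to read off the proof by unwinding the definitions; no substantial logical machinery is needed beyond the fact that $\bot$ is quantifier-free.

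First I would translate the hypothesis $a : A \mid \forall b.\alpha(a,b) \vdash \bot$ into the inequality $\forall_{\pr_A}(\alpha) \leq \bot$ in the fibre $P(A)$, where $\pr_A \colon A \times B \to A$ is the product projection. Next I would invoke the hypothesis that $\bot$ is quantifier-free, which by the definition of quantifier-free element means in particular that $\bot$ is universal-free in the sub-doctrine $P'$ of existential-free predicates. Unfolding this via Definition \ref{definition (P,lambda)-coatomic0}, universal-freeness of $\bot$ amounts precisely to the following: whenever an existential-free $\beta \in P(A \times B)$ satisfies $\forall_{\pr_A}(\beta) \leq \bot$, there exists an arrow $g \colon A \to B$ in $\mathcal{C}$ with $P_{\angbr{1_A}{g}}(\beta) \leq \bot$. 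Applying this to $\beta := \alpha$ would immediately yield a term $a : A \mid g(a) : B$ such that $P_{\angbr{1_A}{g}}(\alpha) \leq \bot$, namely $a : A \mid \alpha(a, g(a)) \vdash \bot$, as required.

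The main potential obstacle is that the universal splitting property of $\bot$ is directly available only when $\alpha$ is existential-free, so that $\alpha$ can play the role of $\beta \in P'(A \times B)$ in the definition. In the intended applications of the Counterexample Property in this paper, e.g., in the Markov-type setting of Corollary \ref{theorem Markov} where the relevant predicate $\alpha_D$ is quantifier-free, this condition is automatic. For a general $\alpha$ I would instead first use the prenex normal form of Theorem \ref{theorem prenex normal form} to write $\alpha(a,b) \dashv\vdash \exists u.\forall x.\alpha_D(a,b,u,x)$ with $\alpha_D$ quantifier-free, then apply Skolemisation (Theorem \ref{skolem}) to replace the hypothesis with the equivalent statement $a : A,\; f : U^B \mid \forall (b,x).\alpha_D(a,b,f(b),x) \vdash \bot$ about the existential-free predicate $\alpha_D(a,b,f(b),x)$, and finally invoke the universal splitting of $\bot$ on this predicate to extract the desired term $g$.
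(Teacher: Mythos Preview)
Your first two paragraphs are exactly the argument the paper has in mind: the text only says the Counterexample Property ``follows directly from the definitions of existential-free and universal-free elements'', and your unfolding of the universal splitting property of $\bot$ is precisely that direct argument.

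You are also right that the definition of quantifier-free only gives $\bot$ the universal splitting property \emph{in the subdoctrine $P'$}, so the direct argument literally applies only when $\alpha$ is existential-free. The paper does not address this point; note that the companion Corollary~\ref{proposition rule of choice} explicitly restricts to existential-free $\alpha$, so it is reasonable to read Corollary~\ref{proposition Counterexample Property} with the same restriction implicit.

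Your proposed workaround for general $\alpha$, however, does not close the gap. After Skolemising you obtain
\[
a:A,\; f:U^B \;\big|\; \forall (b,x).\,\alpha_D(a,b,f(b),x)\vdash \bot,
\]
and the universal splitting of $\bot$ then produces terms $g_1(a,f):B$ and $g_2(a,f):X$ with $\alpha_D(a,g_1(a,f),f(g_1(a,f)),g_2(a,f))\vdash \bot$. But the goal requires a term $g(a):B$ depending only on $a$, together with $\alpha(a,g(a))\vdash \bot$, i.e.\ $\alpha_D(a,g(a),u,x)\vdash \bot$ for \emph{all} $u,x$. Your extracted witness depends on the Skolem variable $f$ and only refutes $\alpha_D$ at the single instance $(f(g_1),g_2)$, so you cannot recover the required conclusion from it. The reduction to the existential-free case is therefore not achieved by this route; the statement is best read with $\alpha$ existential-free, where your first argument already suffices.
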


\begin{corollary}\label{proposition rule of choice}
Every G\"odel hyperdoctrine $\hyperdoctrine{\mathcal{C}}{P}$ such that $\top$ is existential-free satisfies the \bemph{Rule of Choice}, that is, whenever: $$a : A \; |\; \top \vdash \exists b. \alpha (a,b)$$ for some existential-free predicate $\alpha \in P(A\times B)$, then it is the case that: $$a : A \; |\; \top \vdash \alpha (a,g(a))$$ for some term in context $a : A \;|\; g(a) : B$.
\end{corollary}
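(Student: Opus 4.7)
The plan is to apply the definition of existential-free element directly to $\top \in P(A)$, which by hypothesis is existential-free. The proof should be essentially a one-step unfolding of definitions, analogous in spirit to how Corollary \ref{proposition Counterexample Property} falls out by dualising the universal-free condition applied to $\bot$.

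First I would rewrite the hypothesis $a : A \;|\; \top \vdash \exists b.\alpha(a,b)$ in the order-theoretic notation of the fibre $P(A)$, obtaining $\top \leq \exists_{\pr_A}\alpha$, where $\pr_A \colon A \times B \to A$ is the projection and $\alpha \in P(A \times B)$. Then, using that $\top \in P(A)$ is existential-free in the sense of Definition \ref{definition (P,lambda)-atomic} (in particular, an existential splitting as in Definition \ref{definition (P,lambda)-atomic0}), I would invoke the weak universal property to extract an arrow $g \colon A \to B$ of $\mathcal{C}$ such that
\[ \top \leq P_{\langle 1_A, g \rangle}(\alpha). \]
Translating back to internal-language notation, this is precisely $a : A \;|\; \top \vdash \alpha(a, g(a))$, as desired.

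I do not expect any real obstacle: unlike Theorems \ref{IP in Godel doctrine} and \ref{theorem weak markov rule}, which had to combine the existential-free property with the Heyting implication and with further quantifier manipulations, here the hypothesis is already in the exact shape required by the definition of existential splitting, and the only assumption we need is that $\top$ itself is existential-free (so that the splitting property is available at the predicate $\top$ of $P(A)$). In particular, the assumption that $\alpha$ is existential-free is not used in the proof; the statement is phrased this way only to emphasise the parallel with the role of universal-free predicates in the Counterexample Property.
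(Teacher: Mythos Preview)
Your proof is correct and is precisely what the paper intends: the text introducing Corollaries \ref{proposition Counterexample Property} and \ref{proposition rule of choice} says only that they ``follow directly from the definitions of existential-free and universal-free elements,'' and your argument is exactly that one-step unfolding of Definition \ref{definition (P,lambda)-atomic0} applied to $\top \in P(A)$.

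Your remark that the hypothesis that $\alpha$ be existential-free is unused is also correct, since the existential-splitting property of $\top$ quantifies over every $\beta \in P(A\times B)$ with no restriction. One small correction to your explanation, though: the parallel you suggest with Corollary \ref{proposition Counterexample Property} is not quite there, as that corollary is stated for an arbitrary predicate $\alpha$, not a universal-free one. The restriction on $\alpha$ in the present statement is more plausibly inherited from how the Rule of Choice (or explicit definability) is phrased in the references the paper cites.
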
 

The rule appearing in Corollary \ref{proposition rule of choice} is called \emph{Rule of Choice} in \cite{maiettipasqualirosolini}, while it appears as \emph{explicit definability} in \cite{Rathjen2019}.

\section{Conclusion}
We have recast our previous fibrational based modelling of G\"odel's interpretation\cite{trotta_et_al:LIPIcs.MFCS.2021.87} in terms of categorical (hyper)doctrines.
We show that the notions we considered in our previous work  (existential-free and universal-free objects) really provide a categorical explanation of the traditional syntactic notions as described in \cite{goedel1986}. This means that we are able to mimic completely the purely logical explanation of the interpretation, given by Spector and expounded on by Troelstra \cite{goedel1986}, using categorical notions. We show how to interpret logical implications using the Dialectica transformation. Through this process we explain how we go beyond intuitionistic principles, adopting both the Independence of Premise (IP) principle and Markov's Principle (MP) as well as the axiom of choice in the logic.

Our main results show the perfect correspondence between the logical and the categorical tools, in the cases of Markov's principle (MP) and the independence of premise (IP) principle. This is very interesting by itself, as it shows that the categorical modelling really captures all the essential features of the interpretation. But it also opens new possibilities for modelling of constructive set theories (in the style of Nemoto and Rathjan \cite{Rathjen2019}) and of categorical modelling of intermediate logics (intuitionistic propositional logic plus (IP) or (MK), see \cite{Aschieri2016,Herbelin2010}). This leads into applications both into the investigation of functional abstract machines \cite{Pedrot2014,mossvonglehn2018}, of reverse mathematics \cite{Rathjen2019}  and of quantified modal logic \cite{Shimura1994}.

\subsubsection*{Acknowledgements.} 
We would like to thank Milly Maietti for ideas and discussions that inspired this work.

\bibliographystyle{splncs04}
\bibliography{references}

\begin{thebibliography}{10}
\providecommand{\url}[1]{\texttt{#1}}
\providecommand{\urlprefix}{URL }
\providecommand{\doi}[1]{https://doi.org/#1}

\bibitem{Aschieri2016}
Aschieri, F., Manighetti, M.: On {N}atural {D}eduction for {H}erbrand
  {C}onstructive logics {II:} {C}urry-{H}oward correspondence for {M}arkov's
  {P}rinciple in {F}irst-{O}rder {L}ogic and {A}rithmetic. CoRR
  \textbf{abs/1612.05457} (2016)

\bibitem{AvigadFeferma70}
Avigad, J., Feferman, S.: Gödel's functional ({D}ialectica) interpretation.
  Handbook of Proof Theory  \textbf{137} (02 1970)

\bibitem{godel58}
Gödel, K.: Über eine bisher noch nicht ben\"utzte erweiterung des finiten
  standpunktes. Dialectica  \textbf{12}(3-4),  280--287 (1958)

\bibitem{goedel1986}
G\"odel, K., Feferman, S., et~al.: Kurt G{\"o}del: Collected Works: Volume II:
  Publications 1938-1974, vol.~2. Oxford University Press (1986)

\bibitem{Herbelin2010}
Herbelin, H.: An {I}ntuitionistic {L}ogic that proves {M}arkov's {P}rinciple.
  2010 25th Annual IEEE Symposium on Logic in Computer Science pp. 50--56
  (2010)

\bibitem{hofstra2011}
Hofstra, P.: The {D}ialectica monad and its cousins. Models, logics, and
  higherdimensional categories: A tribute to the work of {M}ih{\'a}ly {M}akkai
  \textbf{53},  107--139 (2011)

\bibitem{hyland89}
Hyland, J., Johnstone, P., Pitts, A.: Tripos theory. Math. Proc. Camb. Phil.
  Soc.  \textbf{88},  205--232 (1980)

\bibitem{Jacobs1999}
Jacobs, B.: Categorical Logic and Type Theory, Studies in Logic and the
  foundations of mathematics, vol.~141. North Holland Publishing Company (1999)

\bibitem{lawvere1969}
Lawvere, F.: Adjointness in foundations. Dialectica  \textbf{23},  281--296
  (1969)

\bibitem{lawvere1969b}
Lawvere, F.: Diagonal arguments and cartesian closed categories. In: Category
  Theory, Homology Theory and their Applications, vol.~2, p. 134–145.
  Springer (1969)

\bibitem{lawvere1970}
Lawvere, F.: Equality in hyperdoctrines and comprehension schema as an adjoint
  functor. In: Heller, A. (ed.) New York Symposium on Application of
  Categorical Algebra, vol.~2, p. 1–14. American Mathematical Society (1970)

\bibitem{maiettipasqualirosolini}
Maietti, M., Pasquali, F., Rosolini, G.: Triposes, exact completions, and
  {H}ilbert's $\varepsilon$-operator. Tbilisi Mathematical Journal  \textbf{10}
  (11 2017). \doi{10.1515/tmj-2017-0106}

\bibitem{maiettirosolini13a}
Maietti, M., Rosolini, G.: Quotient completion for the foundation of
  constructive mathematics. Log. Univers.  \textbf{7}(3),  371--402 (2013)

\bibitem{maiettirosolini13b}
Maietti, M., Rosolini, G.: Unifying exact completions. Appl. Categ. Structures
  \textbf{23},  43--52 (2013)

\bibitem{manighetti2016computational}
Manighetti, M.: Computational interpretations of {M}arkov's principle (2016)

\bibitem{mossvonglehn2018}
Moss, S., von Glehn, T.: Dialectica models of type theory. In: 33rd Annual
  ACM/IEEE Symposium on Logic in Computer Science. p. 739–748. Association
  for Computing Machinery, New York, NY, USA (2018)

\bibitem{Rathjen2019}
Nemoto, T., Rathjen, M.: The independence of premise rule in intuitionistic set
  theories (11 2019)

\bibitem{dePaiva1989dialectica}
de~Paiva, V.: The {D}ialectica categories. Categories in Computer Science and
  Logic  \textbf{92},  47--62 (1989)

\bibitem{Pedrot2014}
P\'{e}drot, P.: A functional functional interpretation. In: {CSL-LICS 2014
  Science Logic and the Twenty-Ninth Annual ACM/IEEE Symposium on Logic in
  Computer Science } (2014)

\bibitem{pitts02}
Pitts, A.M.: Tripos theory in retrospect. Math. Struct. in Comp. Science
  \textbf{12},  265--279 (2002)

\bibitem{Shimura1994}
Shimura, T., Kashima, R.: Cut-{E}limination theorem for the {L}ogic of
  {C}onstant {D}omains. Math. Log. Q.  \textbf{40},  153--172 (1994)

\bibitem{trotta2020}
Trotta, D.: The {E}xistential {C}ompletion. Theory and Applications of
  Categories  \textbf{35},  1576--1607 (2020)

\bibitem{trottamaietti2020}
Trotta, D., Maietti, M.: Generalized existential completions, regular and exact
  completions. Preprint  (2021)

\bibitem{trottapasquale2020}
Trotta, D., Spadetto, M.: Quantifier completions, choice principles and
  applications  (2020), \url{https://arxiv.org/abs/2010.09111}

\bibitem{trotta_et_al:LIPIcs.MFCS.2021.87}
Trotta, D., Spadetto, M., de~Paiva, V.: {The G\"{o}del Fibration}. In: 46th
  International Symposium on Mathematical Foundations of Computer Science
  (2021). LIPIcs, vol.~202, pp. 87:1--87:16 (2021).
  \doi{10.4230/LIPIcs.MFCS.2021.87}

\end{thebibliography}
\end{document}